\newcommand{\plotptradius}{4pt}
\tikzset{permpt/.style={circle, draw, fill=black, inner sep=0pt, minimum width=\plotptradius}}
\tikzset{empty/.style={draw=none, fill=none}}
\newcommand\absdot[2]{
	\node[permpt] at #1 {};
}
\newcommand{\plotperm}[2][black]{ 
	\foreach \j [count=\i] in {#2} {
        \ifnum0=\j {} \else {
 		\node[permpt,fill=#1,draw=#1] (\j) at (\i,\j) {};
	} \fi
	}
}
\newcommand{\plotpermbox}[4]{
	\draw [darkgray, very thick, line cap=round, fill=white]
		({#1-0.5}, {#2-0.5}) rectangle ({#3+0.5}, {#4+0.5});
}
\newcommand{\plotpermborder}[1]{
	\foreach \i [count=\nn] in {#1} {\global\let\n\nn};    
	\plotpermbox{1}{1}{\n}{\n}
	\plotperm{#1}
}
\newcommand{\plotpermbordergrid}[1]{
	\foreach \i [count=\nn] in {#1} {\global\let\n\nn};    
	\plotpermbox{1}{1}{\n}{\n}
	\draw[step=1cm,gray!50,very thin] (1,1) grid (\n,\n);
	\plotperm{#1}
}
\newcommand{\plotgrid}[1]{
	\draw[step=1cm,gray!50,very thin] (0.5,0.5) grid (#1+0.5,#1+0.5);
}
\newcommand{\plotpermgrid}[1]{
	\foreach \i [count=\nn] in {#1} {\global\let\n\nn};    
	\plotgrid{\n}
	\plotperm{#1}
}
\newcommand{\plotpinsequence}[1]{
	\absdot{(0,0)}{};
	\edef\n{0}
	\edef\s{0}
	\edef\e{0}
	\edef\w{0}
	\edef\x{0}
	\edef\y{0}
	\foreach \pin [remember=\pin as \oldpin (initially 1), count=\i] in {#1} {
		\ifthenelse{\pin=1 \OR \pin=2}{
			\ifthenelse{\oldpin=3}{
				\xdef\x{\number\numexpr\e-1}
			}{
				\xdef\x{\number\numexpr\w+1}
			}
			\ifnum\i=1 
				\pgfmathparse{\e+1}
 				\xdef\e{\pgfmathresult}
			\fi	
		}{ 
			\ifthenelse{\oldpin=1}{
				\xdef\y{\number\numexpr\n-1}
			}{
				\xdef\y{\number\numexpr\s+1}
			}
			\ifnum\i=1 
				\pgfmathparse{\s-1}
 				\xdef\s{\pgfmathresult}
			\fi	
		}
		\ifnum\pin=1 
			\pgfmathparse{\n+2}
 			\xdef\n{\pgfmathresult}		
			\absdot{(\x,\n)}{};
			\ifnum\i>1
				\draw (\x,\n) -- (\x,\y-0.5);
			\else
				\draw[gray,very thick] (-0.5,-0.5) rectangle (\x+0.5,\n+0.5);
			\fi
		\fi
		\ifnum\pin=2 
			\pgfmathparse{\s-2}
 			\xdef\s{\pgfmathresult}
			\absdot{(\x,\s)}{};
			\ifnum\i>1
				\draw (\x,\s) -- (\x,\y+0.5);
			\else
				\draw[gray,very thick] (-0.5,0.5) rectangle (\x+0.5,\s-0.5);
			\fi
		\fi
		\ifnum\pin=3 
			\pgfmathparse{\e+2}
 			\xdef\e{\pgfmathresult}
			\absdot{(\e,\y)}{};
			\ifnum\i>1
				\draw (\e,\y) -- (\x-0.5,\y);
			\else
				\draw[gray,very thick] (-0.5,+0.5) rectangle (\e+0.5,\y-0.5);
			\fi
		\fi
		\ifnum\pin=4 
			\pgfmathparse{\w-2}
 			\xdef\w{\pgfmathresult}
			\absdot{(\w,\y)}{};
			\ifnum\i>1
				\draw (\w,\y) -- (\x+0.5,\y);
			\else
				\draw[gray,very thick] (0.5,0.5) rectangle (\w-0.5,\y-0.5);

			\fi
		\fi		
	};
}
\tikzset{
  on each segment/.style={
    decorate,
    decoration={
      show path construction,
      moveto code={},
      lineto code={
        \path [#1]
        (\tikzinputsegmentfirst) -- (\tikzinputsegmentlast);
      },
      curveto code={
        \path [#1] (\tikzinputsegmentfirst)
        .. controls
        (\tikzinputsegmentsupporta) and (\tikzinputsegmentsupportb)
        ..
        (\tikzinputsegmentlast);
      },
      closepath code={
        \path [#1]
        (\tikzinputsegmentfirst) -- (\tikzinputsegmentlast);
      },
    },
  },
  mid arrow/.style={postaction={decorate,decoration={
        markings,
        mark=at position .5 with {\arrow[#1]{stealth}}
      }}},
}
\newtheorem{thm}{Theorem}[section]
\newtheorem*{thm*}{Theorem}
\newtheorem{prop}[thm]{Proposition}
\newtheorem{cor}[thm]{Corollary}
\newtheorem{lemma}[thm]{Lemma}
\newtheorem*{conj*}{Conjecture}
\theoremstyle{definition}
\newtheorem{defn*}{Definition}
\newtheorem*{example*}{Example}
\newtheorem*{comment*}{Comment}
\let\C\CCC
\newcommand{\DDD}{\mathcal{D}}
\newcommand{\EEE}{\mathcal{E}}
\newcommand{\MMM}{\mathcal{M}}
\newcommand{\fS}{\mathfrak{S}}
\newcommand{\fP}{\mathfrak{P}}
\newcommand{\av}{\mathsf{Av}}
\newcommand{\gr}{\mathrm{gr}}
\newcommand{\Grid}{\mathsf{Grid}}
\newcommand{\juxt}[2]{#1\,#2}
\newcommand{\juxtbar}[2]{#1\,|\,#2}
\newcommand{\gridded}{\sharp}
\newcommand{\peaks}{\mathsf{Peaks}}
\tikzstyle{vertex}=[circle, draw, fill=black,
\title{Labelled well-quasi-order in juxtapositions of permutation classes}
\author{Robert Brignall\\
\small School of Mathematics and Statistics\\
\small The Open University, UK
}
\begin{document}
\maketitle

\begin{abstract}
The \emph{juxtaposition} of permutation classes $\C$ and $\DDD$ is the class of all permutations formed by concatenations $\sigma\tau$, such that $\sigma$ is order isomorphic to a permutation in $\C$, and $\tau$ to a permutation in~$\DDD$.

We give simple necessary and sufficient conditions on the classes $\C$ and $\DDD$ for their juxtaposition to be labelled well-quasi-ordered (lwqo): namely that both $\C$ and $\DDD$ must themselves be lwqo, and at most one of $\C$ or $\DDD$ can contain arbitrarily long zigzag permutations. We also show that every class without long zigzag permutations has a growth rate which must be integral.
\end{abstract}

%
%
%
%
%
%
%
%
{\centering \emph{For Sophie}\par}

\section{Introduction}

Let $\C$ and $\DDD$ be permutation classes. The \emph{juxtaposition} $\juxt{\C}{\DDD}$ is the permutation class comprising all permutations formed by concatenations $\sigma\tau$, where $\sigma$ is order isomorphic to a permutation in $\C$ and $\tau$ is order isomorphic to a permutation in $\DDD$.

A \emph{zigzag permutation} (or just \emph{zigzag}) is a permutation $\pi=\pi(1)\cdots\pi(n)$ with the property that there is no index $i\in[n-2]$ such that $\pi(i)\pi(i+1)\pi(i+2)$ forms a monotone increasing or decreasing pattern.\footnote{Zigzag permutations (sometimes called \emph{alternating permutations}, but we reserve the term `alternating' for other purposes) have been widely studied in relation to enumerative problems, and are strongly related to the Euler numbers (sequence A000111 of the OEIS~\cite{oeis}) -- for a survey, see Stanley~\cite{stanley:survey-alternating:}.}
The main purpose of this note is to establish the following theorem.

\begin{thm}\label{thm:main}
The juxtaposition $\juxt{\C}{\DDD}$ is labelled-well-quasi-ordered if and only if both $\C$ and $\DDD$ are lwqo, and at least one of $\C$ or $\DDD$ contains only finitely many zigzag permutations.
\end{thm}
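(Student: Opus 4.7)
Both $\C$ and $\DDD$ embed into $\juxt{\C}{\DDD}$ as the sub-collections obtained by taking the right (respectively left) factor to be empty, so if the juxtaposition is lwqo then so are $\C$ and $\DDD$. The delicate half of necessity is showing that $\juxt{\C}{\DDD}$ fails to be wqo (hence fails to be lwqo) whenever both classes contain zigzags of every length. To do this I would construct an explicit infinite antichain $(\pi_n)$: each $\pi_n$ has the form $\zeta_n\omega_n$ where $\zeta_n\in\C$ and $\omega_n\in\DDD$ are zigzags of length $\Theta(n)$, with the values across the cut interleaved in a rigid alternating pattern. The interleaving is designed so that any pattern embedding of $\pi_m$ into $\pi_n$ must respect the cut and preserve the zigzag structure on each side, so comparability forces $m=n$.

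\textbf{Sufficiency: setup.} Assume $\C$ and $\DDD$ are lwqo and, without loss of generality, that $\DDD$ contains no zigzag of length $K$ for some fixed $K$. I would first appeal to the companion structural fact (the one that underlies the integral growth-rate claim in the abstract): a class which avoids a sufficiently long zigzag is monotone griddable. Thus $\DDD\subseteq\Grid(M)$ for some $\{-1,0,1\}$-valued matrix $M$, and it suffices to prove that $\juxt{\C}{\Grid(M)}$ is lwqo.

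\textbf{Sufficiency: execution.} Given a wqo $Q$ and an infinite sequence of $Q$-labelled permutations $(\pi_i,f_i)$ in $\juxt{\C}{\Grid(M)}$, I would pick for each $i$ a cut $\pi_i=\sigma_i\tau_i$ with $\overline{\sigma}_i\in\C$ and $\tau_i$ gridded by $M$, and then split $\tau_i$ into its $c$ monotone cells. At each position of $\sigma_i$ I would augment the $Q$-label with a $c$-tuple in $\mathbb{N}^c$ recording, for each cell of $\tau_i$, the number of cell entries that lie below the current value; symmetrically I would augment each position of $\tau_i$ with an integer counting how many entries of $\sigma_i$ lie below it. The enriched labels take values in $Q\times\mathbb{N}^c$ and $Q\times\mathbb{N}$, both wqo. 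Applying the lwqo of $\C$ on the left, a Higman-style wqo argument for the monotone cells on the right, and standard Ramsey-type pigeonholing on which cuts fall where, I would extract indices $i<j$ admitting simultaneous label-respecting pattern embeddings $\sigma_i\hookrightarrow\sigma_j$ and $\tau_i\hookrightarrow\tau_j$ (cell-by-cell).

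\textbf{Main obstacle.} The technical heart of the argument is the final gluing step: verifying that the label-respecting embeddings on the two sides of the cut really combine into a labelled pattern embedding $\pi_i\hookrightarrow\pi_j$. This will work because each cell of $\tau_i$ is monotone, so the ``entries below'' counts are themselves monotone in both coordinates, and hence are preserved by embeddings produced by Higman's lemma. Without the zigzag-avoidance hypothesis on $\DDD$, these tags lose their monotonicity, and one recovers exactly the antichains used in the necessity direction.
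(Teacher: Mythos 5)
Your overall architecture (necessity via antichains when both classes have long zigzags; sufficiency via monotone gridding of the zigzag-bounded class plus a labelled Higman argument) is the same as the paper's, but both halves have genuine gaps. For necessity, ``interleave two long zigzags across the cut'' is not yet a construction: which antichain you can build depends on \emph{which kind} of long zigzags each class contains. A class with arbitrarily long zigzags either fails to be monotone griddable (and so contains $\bigoplus 21$ or $\bigominus 12$) or contains arbitrarily long vertical alternations; this dichotomy is the paper's Lemma~\ref{lem:griddable}, and the three resulting cases require genuinely different interleavings (the three pictures in Figure~\ref{fig-antichains}). Moreover, your claim that every pattern embedding of $\pi_m$ into $\pi_n$ ``must respect the cut'' is precisely the hard anchoring argument, which you assert rather than prove. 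The paper sidesteps all of this by reducing each case to a grid class whose cell graph has a cycle or has two non-monotone-griddable cells in one component, and then citing the known antichain constructions via Theorem~\ref{thm:brignall}.

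For sufficiency there are two concrete problems. First, monotone griddability of $\DDD$ is not enough: for an arbitrary monotone gridding matrix $M$, the class $\Grid(M)$ can contain arbitrarily long vertical alternations (hence long zigzags) even when $\DDD$ does not --- e.g.\ $\DDD=\av(21)$ sits inside the grid class of a $2\times 1$ all-$\av(21)$ column, whose juxtaposition with a suitable lwqo $\C$ already contains a non-wqo $2\times 2$ monotone grid class --- so your reduction ``it suffices to prove $\juxt{\C}{\Grid(M)}$ is lwqo'' is unsound. You need the stronger Proposition~\ref{prop:w-class}: $\DDD$ lies in a \emph{single-row} monotone grid class, which the paper then peels off one cell at a time via Theorem~\ref{thm:juxt-lwqo}. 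Second, the gluing step fails as described: extracting label-respecting embeddings $\sigma_i\hookrightarrow\sigma_j$ and $\tau_i\hookrightarrow\tau_j$ \emph{separately} and hoping they combine is the classical pitfall in such arguments, and your $\mathbb{N}$-valued ``entries below'' tags are only preserved up to inequality under the wqo order on $\mathbb{N}$, which does not force the two embeddings to agree on how values interleave across the cut. The paper's Theorem~\ref{thm:juxt-lwqo} avoids this by attaching to each point of the left factor the actual \emph{word} in $L^*$ of labels of the right-hand points in the horizontal strip just below it, so that a single application of the lwqo of $\C$ over the label set $L\times L^*$ (with one further $L^*$ factor for the topmost strip) produces a consistent embedding of the entire gridded permutation in one step.
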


The juxtaposition of permutations was first introduced in Atkinson's foundational work~\cite{atkinson:restricted-perm:}, and has since been studied in terms of enumeration (see, for example,~\cite{bs:context-free}) since it represents a natural yet non-trivial way to combine two permutation classes. Indeed, juxtapositions are a special case of \emph{grid classes}, which we define in the next section.

The study of well-quasi-ordering and infinite antichains in permutation classes dates back to the 1970s in the work of Tarjan~\cite{tarjan:sorting-using-n:} and Pratt~\cite{pratt:computing-permu:}, and rose to prominence in the 2000s as a result of works such as Atkinson, Murphy and Ru\v{s}kuc~\cite{atkinson:partially-well-:} and Murphy and Vatter~\cite{murphy:profile-classes:}. The stronger notion of \emph{labelled} well-quasi-ordering dates back to Pouzet~\cite{pouzet:un-bel-ordre-da:}, but received little attention in the context of permutation classes until the current author's recent work with Vatter~\cite{bv:lwqo-for-pp:}. 

The rest of this paper is organised as follows. In Section~\ref{sec:prelim} we briefly cover the requisite terminology. In Section~\ref{sec:zigzags} we provide a necessary and sufficient characterisation of permutation classes without long zigzags. As a by-product of this characterisation, we show that every permutation class without long zigzags has an integral growth rate. In Section~\ref{sec:lwqo} we prove that the juxtaposition of a labelled well-quasi-ordered permutation class with $\av(21)$ or $\av(12)$ is again labelled well-quasi-ordered, and this, together with the characterisation from Section~\ref{sec:zigzags}, enables us to complete our proof of Theorem~\ref{thm:main}. We finish with some concluding remarks in Section~\ref{sec:conclusion}.

\section{Preliminaries}\label{sec:prelim}

\paragraph{Permutation classes}
We provide here only the minimum terminology required for our purposes, and refer the reader to~\cite{bevan2015defs} for fuller details.

A permutation of length $n$, typically denoted $\pi=\pi(1)\cdots\pi(n)$, is an ordering of the symbols in $[n]=\{1,\dots,n\}$. We say that $\sigma=\sigma(1)\cdots\sigma(k)$ is \emph{contained} in $\pi$, and write $\sigma\leq \pi$, if there exists a subsequence $1\leq i_1\leq\cdots\leq i_k\leq n$ such that the relative ordering of the points in $\pi(i_1)\cdots\pi(i_k)$ is the same as that of $\sigma$. That is, $\pi$ contains a subsequence that is \emph{order isomorphic} to $\sigma$.

A \emph{permutation class} $\C$ is a set of permutations closed downwards under containment. Every such class can be described by its set of minimal forbidden elements, but for our purposes it suffices to record that $\av(21)=\{1,12,123,\dots\}$ is the class of increasing permutations, and $\av(21)=\{1,21,321,\dots\}$ is the class of decreasing permutations. Two other classes we will require are as follows
\begin{gather*}
\bigoplus 21 = \{ \text{finite subpermutations of }21436587\cdots\} = av(231,312,321),\\
\bigominus 21 = \{ \text{finite subpermutations of }\cdots78563412\} = av(123,132,213).
\end{gather*}

One important family of permutation classes in the structural study of permutations are \emph{grid classes}. These are defined by a \emph{gridding matrix} $\MMM$ of permutation classes, and each permutation in $\Grid(\MMM)$ has the property that its plot can be divided using horizontal and vertical lines into a grid of cells, of the same dimensions as $\MMM$, and such that the entries in each cell of the plot are order isomorphic to a permutation that belongs to a class in the corresponding cell of $\MMM$. 

Of particular note are \emph{monotone} grid classes, where each cell of $\MMM$ is $\av(21)$, $\av(12)$ or empty, and we say that a permutation class $\C$ is \emph{monotone griddable} if it is the subclass of some monotone grid class. We need the following characterisation.

\begin{thm}[{Huczynska and Vatter~\cite[Theorem 2.5]{Huczynska2006}}]\label{thm:hv}
	A permutation class is monotone griddable if and only if it has finite intersection with $\bigoplus21$ and $\bigominus 12$.
\end{thm}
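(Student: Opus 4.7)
The plan is to prove the two directions separately. The necessity direction ($\Rightarrow$) is a pigeonhole argument on the cells of the putative grid; the sufficiency direction ($\Leftarrow$) requires constructing a monotone gridding explicitly from the absence of long zigzag-like patterns.

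For necessity, suppose $\C \subseteq \Grid(\MMM)$ with $\MMM$ monotone of dimensions $t \times u$. I would show that the longest element of $\bigoplus 21$ occurring in $\Grid(\MMM)$ has length bounded by a function of $tu$, and symmetrically for $\bigominus 12$. The key observation is that in any gridding of the $N$-fold direct sum $21 \oplus 21 \oplus \cdots \oplus 21$ inside $\MMM$, each consecutive ``21'' block either sits entirely within a single $\av(12)$-cell or straddles one of the $O(tu)$ cell boundaries. Within a single monotone cell, only a monotone sub-pattern of the zigzag can occur, which bounds the number of blocks a cell can host. A pigeonhole then forces $N \leq C(t,u)$ for an explicit function $C$, with the $\bigominus 12$ argument obtained by reverse-complement symmetry.

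For sufficiency, suppose $\C \cap \bigoplus 21$ and $\C \cap \bigominus 12$ each contain no element of length exceeding $2k$. I would construct, for every $\pi \in \C$, a gridding inside a monotone matrix $\MMM(k)$ whose dimensions depend only on $k$. My approach is a greedy decomposition: scan $\pi$ left to right, and at each stage extract the longest monotone prefix of the remaining points as a cell-column, recording the vertical range of that run so as to assign it a row of the matrix. Each transition between consecutive cells is then an adjacent descent or ascent pair. An Erd\H{o}s--Szekeres-style extraction on the sequence of transition heights produces, from any sufficiently long sequence, either a nested chain of descent-pairs that assembles into a $\bigoplus 21$ of length $>2k$, or a nested chain of ascent-pairs that assembles into a $\bigominus 12$ of length $>2k$; the hypothesis precludes both, bounding the number of cells in terms of $k$.

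The main obstacle is ensuring that a monotone chain extracted from the transition heights can actually be promoted to a genuine zigzag pattern: one needs not only that the ``high ends'' and ``low ends'' of the transition pairs each form a monotone sequence, but also that the pairs are \emph{stacked}, each pair's interval of values lying entirely above (or below) the previous. A two-stage Erd\H{o}s--Szekeres, first on the low ends and then refining on the high ends within that, together with a careful discard of non-stacked pairs, should deliver the stacked chain at the cost of a worse (but still polynomial-in-$k$) bound on the cell count. A secondary concern is amalgamating the per-$\pi$ griddings into a single fixed matrix $\MMM(k)$ valid for all of $\C$; this is handled by taking a componentwise union of candidate matrices of the bounded dimensions and observing that only finitely many such matrices exist.
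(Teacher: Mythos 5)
First, a caveat: the paper does not prove this statement --- it is imported verbatim from Huczynska and Vatter --- so there is no in-paper proof to compare against, and I can only assess your argument on its own terms. Before doing so, note that the statement as transcribed is itself a slight abuse: with $\bigoplus 21$ defined (as in Section~\ref{sec:prelim}) to be the downward-closed class $\av(231,312,321)$, every increasing permutation belongs to $\bigoplus 21$, so $\av(21)$ is monotone griddable yet has infinite intersection with $\bigoplus 21$. The intended condition is that $\C$ contains the particular permutations $2143\cdots(2N)(2N{-}1)$ and their reverse-complements only for bounded $N$ (equivalently, $\C$ does not contain all of $\bigoplus 21$ or all of $\bigominus 12$), and this is what your necessity argument actually addresses. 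That direction is essentially sound: a monotone cell hosts at most one intact descent pair of $2143\cdots(2N)(2N{-}1)$, since two intact pairs would give the non-monotone pattern $2143$; the one detail you should make explicit is that each of the $t+u-2$ internal grid lines separates at most one pair, because the pairs occupy pairwise disjoint and ordered position intervals and value intervals. This gives $N\leq tu+t+u-2$.

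The sufficiency direction has a genuine gap. Your greedy decomposition into maximal monotone runs of \emph{consecutive positions} can only ever produce a single-row gridding (a concatenation of monotone blocks), and single-row monotone griddability is a strictly stronger property than monotone griddability --- by Proposition~\ref{prop:w-class} and Lemma~\ref{lem:griddable} of this paper it is equivalent to having no long zigzags. The parallel alternation $\pi_n=(n{+}1)\,1\,(n{+}2)\,2\cdots(2n)\,n$ is a concrete counterexample to your strategy: one checks that it avoids both $2143$ and $563412$, so the class of its subpermutations satisfies your hypothesis with a small fixed $k$, and it is griddable in a $2\times 1$ monotone matrix; yet every maximal consecutive monotone run in $\pi_n$ has length $2$, so your scan produces $n$ cells, and the Erd\H{o}s--Szekeres step cannot rescue this because no two of the $n$ transition pairs are stacked (their value intervals all overlap pairwise), so discarding non-stacked pairs leaves essentially nothing and no forbidden pattern is produced. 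The missing idea is that the gridding must be built from vertical and horizontal cuts chosen jointly, with cells that are not contiguous blocks of positions; the actual Huczynska--Vatter argument is correspondingly more delicate. (Separately, your final amalgamation should not be a ``componentwise union'' --- the union of $\av(12)$ and $\av(21)$ in a single cell is not monotone --- but rather the standard observation that the finitely many candidate matrices of bounded size can be assembled block-diagonally into one larger monotone matrix containing the union of their grid classes.)
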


The juxtaposition $\juxt\C\DDD$ can alternatively be considered as $\Grid(\MMM)$ where $\MMM=\begin{bmatrix}\C&\DDD\end{bmatrix}$. Also of interest to us is the class of \emph{gridded} permutations in a juxtaposition -- denoted $\juxtbar\C\DDD$ -- whose members comprise the permutations of $\juxt\C\DDD$ together with a vertical line that witnesses the permutation's membership of the juxtaposition. Note that each permutation in $\juxt\C\DDD$ can correspond to more than one gridded permutation in $\juxtbar\C\DDD$. The same notion exists for grid classes defined by larger matrices: if $\C\subseteq \Grid(\MMM)$ then $\C^\gridded$ denotes the set of permutations in $\C$ equipped with horizontal and vertical lines to witness their membership of $\Grid(\MMM)$.

\paragraph{Well-quasi-ordering}
A quasi-order $(P,\leq)$ is \emph{well-quasi-ordered} (wqo) if it contains no infinite descending chain, and no infinite antichain -- that is, a set of pairwise incomparable elements. For quasi-ordered classes of combinatorial objects (such as permutation classes or gridded permutation classes), this condition typically reduces to checking for the presence of infinite antichains. 

Given a quasi-order $(P,\leq)$, let $P^*$ denote the set of finite sequences of $P$. The set $P^*$ can be ordered using the \emph{generalised subword order}: for $v=v_1\cdots v_m$ and $w=w_1\cdots w_n$ in $P^*$, we say that $v\preceq w$ if there exists a subsequence $1\leq i_1\leq\cdots\leq i_m\leq n$ such that $v_j\leq w_{i_j}$ for all $1\leq j\leq m$. One celebrated result that we will need is Higman's lemma:

\begin{lemma}[Higman~\cite{higman:ordering-by-div:}]\label{lem:higman}
If $(P,\leq)$ is a wqo set, then so is $(P^*,\preceq)$.
\end{lemma}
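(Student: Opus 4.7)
My plan is to use the classical minimal bad sequence argument due to Nash--Williams. Recall that a sequence $(x_i)_{i\geq 1}$ in a quasi-order is called \emph{bad} if there are no indices $i<j$ with $x_i\leq x_j$; a quasi-order is wqo if and only if it has no infinite bad sequence (using the standard fact that every infinite sequence in a wqo contains an infinite non-decreasing subsequence). The goal is therefore to show that $P^*$ admits no infinite bad sequence, assuming $P$ does not.

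Suppose for contradiction that $(P^*,\preceq)$ has an infinite bad sequence. Construct a particular bad sequence $w_1,w_2,\ldots$ recursively by choosing $w_i$ to be an element of $P^*$ of shortest possible length such that $w_1,\ldots,w_{i-1},w_i$ is the start of some infinite bad sequence. (This is where the argument uses dependent choice; the key point is that at each stage the set of candidate extensions is non-empty by the previous stage's choice.) Each $w_i$ must be non-empty, since the empty word is $\preceq$-below every element of $P^*$ and hence cannot appear in a bad sequence. So we may write $w_i = a_i\cdot v_i$ where $a_i\in P$ is the first letter and $v_i\in P^*$ is the suffix, with $|v_i|<|w_i|$.

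Now I apply wqo of $P$ to the sequence $(a_i)_{i\geq 1}$: it contains an infinite non-decreasing subsequence $a_{i_1}\leq a_{i_2}\leq\cdots$ with $i_1<i_2<\cdots$. Consider the hybrid sequence
\[
w_1,\,w_2,\,\ldots,\,w_{i_1-1},\,v_{i_1},\,v_{i_2},\,v_{i_3},\,\ldots\,.
\]
By the minimality in the construction of $(w_i)$---specifically, because $|v_{i_1}|<|w_{i_1}|$---this hybrid sequence cannot be bad. So somewhere we find $x\preceq y$ with $x$ occurring before $y$. This comparability cannot lie entirely within $w_1,\ldots,w_{i_1-1}$ (else $(w_i)$ itself would not be bad); it cannot be of the form $w_j\preceq v_{i_k}$, because then $w_j\preceq v_{i_k}\preceq a_{i_k}\cdot v_{i_k}=w_{i_k}$, again contradicting badness of $(w_i)$; so it must be $v_{i_j}\preceq v_{i_k}$ for some $j<k$. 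Combining with $a_{i_j}\leq a_{i_k}$ and the definition of generalised subword order gives $w_{i_j}=a_{i_j}\cdot v_{i_j}\preceq a_{i_k}\cdot v_{i_k}=w_{i_k}$, the final contradiction.

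The only genuinely delicate point is the extraction of the minimal bad sequence: one must be careful that ``length'' provides a legitimate well-founded measure on $P^*$, and that the recursive choice makes sense. Both are standard---lengths are natural numbers, and at each stage the set of one-step extensions that can continue to an infinite bad sequence is non-empty by the inductive hypothesis---so this is the only step demanding care, and it is the main conceptual obstacle rather than a technical one.
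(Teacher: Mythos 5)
Your argument is the standard Nash--Williams minimal bad sequence proof of Higman's lemma, and it is correct: the recursive choice of a length-minimal continuation is well-founded, each $w_i$ is non-empty, the hybrid sequence $w_1,\ldots,w_{i_1-1},v_{i_1},v_{i_2},\ldots$ cannot be bad by minimality at stage $i_1$, and your three-way case analysis correctly rules out every possible comparable pair, with the final case correctly reassembling $w_{i_j}\preceq w_{i_k}$ from $a_{i_j}\leq a_{i_k}$ and $v_{i_j}\preceq v_{i_k}$. The paper itself offers no proof to compare against---it cites Higman's lemma as a known result---so there is nothing to reconcile; your write-up supplies a self-contained proof where the paper relies on the literature. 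The one point worth making explicit if this were to be included is the equivalence you invoke between the paper's definition of wqo (no infinite descending chain and no infinite antichain) and the absence of infinite bad sequences, which requires a short Ramsey-type argument; you gesture at this but do not prove it.
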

 
Another way to combine wqo sets and obtain another wqo set is by taking products:

\begin{prop}[{See~\cite[Proposition 1.2]{bv:lwqo-for-pp:}}]\label{prop:product}
	Let $(P,\leq_P)$ and $(Q,\leq_Q)$ be wqo sets. Then $P\times Q$ is wqo under the product order, $(p_1,q_1)\leq (p_2,q_2)$ if and only if $p_1\leq_P p_2$ and $q_1\leq_Q q_2$.
\end{prop}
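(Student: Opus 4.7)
The plan is to invoke the standard equivalent characterisation of well-quasi-ordering: a quasi-order $(X,\leq)$ is wqo if and only if every infinite sequence $x_1,x_2,\ldots$ in $X$ admits an infinite non-decreasing subsequence $x_{i_1}\leq x_{i_2}\leq\cdots$. This equivalence is itself a standard consequence of Ramsey's theorem applied to the 2-colouring of pairs $\{i,j\}$ (with $i<j$) according to whether $x_i\leq x_j$ or not: a monochromatic infinite set in one colour gives an infinite chain, in the other an infinite antichain or descending chain, and the absence of descending chains is handled by a preliminary extraction.

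With this tool in hand, the argument is a two-step extraction. Given an infinite sequence $(p_1,q_1),(p_2,q_2),\ldots$ in $P\times Q$, I would first apply the characterisation to the projected sequence $p_1,p_2,\ldots$ in $P$, obtaining indices $i_1<i_2<\cdots$ with $p_{i_1}\leq_P p_{i_2}\leq_P\cdots$. Then I would apply the characterisation again to the restricted sequence $q_{i_1},q_{i_2},\ldots$ in $Q$, obtaining a further refinement $j_1<j_2<\cdots$ of the indices with $q_{i_{j_1}}\leq_Q q_{i_{j_2}}\leq_Q\cdots$. Both coordinates are then non-decreasing along the subsequence, so the product sequence is non-decreasing, and in particular contains a comparable pair. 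Applying the equivalent characterisation in reverse confirms $P\times Q$ is wqo.

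There is essentially no obstacle here: the result is classical and the only non-routine ingredient is the Ramsey-style equivalence, which the paper is free to quote or prove in a line. If one wished to avoid Ramsey, one could instead argue directly: suppose for contradiction that $(p_1,q_1),(p_2,q_2),\ldots$ is a bad sequence (no $i<j$ with $(p_i,q_i)\leq(p_j,q_j)$), then by wqo of $P$ extract a subsequence on which the $p$-coordinates are non-decreasing, and on that subsequence the $q$-coordinates must form a bad sequence in $Q$, contradicting wqo of $Q$. Either formulation is short enough that the author likely presents it in a few lines.
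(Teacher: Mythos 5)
Your proof is correct. Note that the paper does not actually prove this proposition: it is quoted verbatim from \cite[Proposition 1.2]{bv:lwqo-for-pp:} and used as a black box, so there is no in-paper argument to compare against. Both of your formulations are the standard classical ones: the two-step extraction via the ``every infinite sequence has an infinite non-decreasing subsequence'' characterisation of wqo, and the direct bad-sequence argument. The only point worth tightening is your sketch of the Ramsey step: with a two-colouring of pairs by $x_i\leq x_j$ versus $x_i\not\leq x_j$, a monochromatic set in the second colour is a bad sequence, and extracting from it an infinite antichain or an infinite strictly descending chain requires a second application of Ramsey (refining ``$x_i\not\leq x_j$'' into ``$x_j\leq x_i$'' versus ``incomparable''); as you note, this is routine, and your bad-sequence variant avoids the issue entirely.
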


The final piece of core wqo machinery we require is as follows. We say that a mapping $\Phi:P\to Q$ between two quasi-orders is \emph{order preserving} if $p_1\leq_P p_2$ implies $\Phi(p_1)\leq_Q \Phi(p_2)$. We have:

\begin{prop}[{See~\cite[Proposition 1.10]{bv:lwqo-for-pp:}}]\label{prop-order-preserving}
Let $(P,\leq_P)$ and $(Q,\leq_Q)$ be quasi-orders, and suppose that $\Phi:	(P,\leq_P)\to (Q,\leq_Q)$ is an order-preserving surjection. If $(P,\leq_P)$ is wqo, then so is $(Q,\leq_Q)$.
\end{prop}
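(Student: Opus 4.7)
The plan is to invoke the standard equivalent formulation of well-quasi-order, namely that $(P,\leq_P)$ is wqo if and only if every infinite sequence $p_1,p_2,\dots$ in $P$ admits indices $i<j$ with $p_i\leq_P p_j$. This equivalence follows from a short Ramsey or sequence-extraction argument and is folklore; with it in hand, the proposition reduces to a one-line pullback.

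Given any infinite sequence $(q_i)_{i\geq 1}$ in $Q$, I will use the surjectivity of $\Phi$ to choose, for each $i$, a preimage $p_i\in \Phi^{-1}(q_i)$. Applying the wqo hypothesis on $P$ to the sequence $(p_i)$ yields indices $i<j$ with $p_i\leq_P p_j$, and then order preservation gives $q_i=\Phi(p_i)\leq_Q \Phi(p_j)=q_j$. Since the sequence $(q_i)$ was arbitrary, every infinite sequence in $Q$ contains a good pair, so $(Q,\leq_Q)$ is wqo.

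If one prefers to work directly from the two-part definition used in the paper (no infinite antichain, no infinite strictly descending chain), the same pullback handles both cases: a putative infinite antichain in $Q$ lifts to a sequence in $P$ whose good pair contradicts incomparability in $Q$, while an infinite strictly descending chain $q_1>q_2>\cdots$ in $Q$ lifts to a sequence whose good pair would force $q_i\leq_Q q_j$ for some $i<j$, contradicting strict descent. There is no substantive obstacle here: the whole argument is a routine combination of surjectivity, order preservation, and the wqo hypothesis on $P$, and the only care required is in stating the chosen formulation of wqo cleanly at the outset.
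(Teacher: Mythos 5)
Your proof is correct, and it is the standard argument: lift an arbitrary infinite sequence in $Q$ to one in $P$ via surjectivity, extract a good pair using the wqo hypothesis, and push it forward via order preservation. The paper itself gives no proof of this proposition (it is quoted from an external reference), so there is nothing to diverge from; your argument, including the remark reconciling the good-pair formulation with the antichain/descending-chain definition, is exactly what is needed.
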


\paragraph{Labelled well-quasi-ordering}
Let $(L,\leq_L)$ be any quasi-order. An $L$-labelling of a permutation $\pi$ of length $n$ (or of a gridded permutation of length $n$) is a mapping $\ell_\pi$ from the indices of $\pi$ to elements of $L$. We write the resulting $L$-labelled permutation as $(\pi,\ell_\pi)$, and the set of all $L$-labelled permutations from some set (or class) $\C$ is denoted $\C\wr L$. 

The set $\C\wr L$ induces a natural ordering: Let $\sigma,\pi\in\C$ be of lengths $m$ and $n$, respectively. We say that $(\sigma,\ell_\sigma)$ is contained in $(\pi,\ell_\pi)$ if there exists a subsequence $1\leq i_1\leq\cdots\leq i_m\leq n$ such that $\pi(i_1)\cdots\pi(i_m)$ is order isomorphic to $\sigma$, and $\ell_\sigma(j) \leq_L \ell_\pi(i_j)$ for all $j\in[m]$.

Finally, a set or class $\C$ is \emph{labelled well-quasi-ordered} (lwqo) if $\C\wr L$ is a wqo set for every wqo set $(L,\leq_L)$. We refer the reader to~\cite{bv:lwqo-for-pp:} for a complete treatment of lwqo in permutation classes.

\section{Zigzags}\label{sec:zigzags}

A \emph{peak} of a permutation $\pi$ is a position $i$ such that $\pi(i-1)<\pi(i)>\pi(i+1)$. The \emph{peak set} of $\pi$ is \[\peaks(\pi)=\{ i: \pi(i-1)<\pi(i)>\pi(i+1)\}.\]
The peak set has been much studied in enumerative and algebraic combinatorics, see, for example, Nyman~\cite{nyman:the-peak:} and Billey, Burdzy and Sagan~\cite{billey:permutations-with:}, although here it simply provides convenient terminology to prove the following result.

\begin{prop}\label{prop:w-class}
Let $\C$ be a permutation class that contains only finitely many zigzags. Then $\C$ is contained in $\Grid(\MMM)$ for a matrix $\MMM$ comprising one row, and in which each entry is $\av(21)$ or $\av(12)$.\footnote{The grid classes appearing in Proposition~\ref{prop:w-class}  were originally introduced by Atkinson, Murphy and Ru\v{s}kuc~\cite{atkinson:partially-well-:} under the term `$W$-classes'.}
\end{prop}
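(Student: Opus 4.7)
The plan is to bound the number of maximal monotone runs of any $\pi\in\C$ by the length of the longest zigzag in $\C$, and then to construct a one-row grid matrix $\MMM$ of alternating monotone cells that accommodates every such $\pi$.

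First I would establish the key structural fact: any permutation $\pi$ of length $n\ge 2$ with $p$ peaks and $v$ valleys contains a zigzag of length $p+v+2$ as a subpattern. To see this, let $1=q_0<q_1<\cdots<q_{p+v}<q_{p+v+1}=n$ be the positions consisting of the two endpoints of $\pi$ together with all internal peaks and valleys. Between consecutive $q_i$ and $q_{i+1}$ the permutation $\pi$ is monotone, and these monotone runs strictly alternate in direction, since a peak caps an ascending run and starts a descending one, and analogously at every valley. Consequently the subsequence $\pi(q_0)\pi(q_1)\cdots\pi(q_{p+v+1})$ is alternating --- no three consecutive entries form a monotone pattern --- and so it is a zigzag contained in $\pi$, hence a zigzag in $\C$.

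Since $\C$ contains only finitely many zigzags, there exists some $Z$ such that every zigzag in $\C$ has length at most $Z$. By the previous step, each $\pi\in\C$ satisfies $p+v+2\le Z$, so $\pi$ splits into at most $Z-1$ maximal monotone runs, whose directions strictly alternate. I would then let $\MMM$ be the $1\times Z$ matrix whose entries alternate $\av(21),\av(12),\av(21),\av(12),\ldots$. Given any $\pi\in\C$, place vertical gridlines immediately after each internal peak or valley to partition the positions of $\pi$ into at most $Z-1$ contiguous cells of alternating monotone type. If the first run of $\pi$ is increasing, assign these cells into positions $1,2,\ldots$ of $\MMM$; otherwise assign them into positions $2,3,\ldots$ of $\MMM$, leaving cell $1$ empty. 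Either way each cell's monotone type matches the corresponding class in $\MMM$, so $\pi\in\Grid(\MMM)$.

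The only delicate point is verifying the zigzag claim in the first paragraph, which rests on the strict alternation of run directions between the distinguished positions; once that is in place, the remainder is bookkeeping, and the choice of matrix length $Z$ automatically absorbs the one-cell shift that arises from $\pi$ possibly beginning with a descending run.
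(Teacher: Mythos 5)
Your proof is correct and follows essentially the same route as the paper's: both extract a zigzag subpermutation from the peak/valley structure of $\pi$ to bound the number of maximal monotone runs, and then grid $\pi$ into a single row of alternating monotone cells. The only differences are cosmetic --- you use the internal peaks and valleys together with the two endpoints (giving a zigzag of length $p+v+2$) where the paper uses the peaks together with the valley bottoms between them (giving length $2|\peaks(\pi)|$), and you absorb the initial-direction ambiguity by allowing the first cell to be empty rather than by fixing the parity and starting direction of the matrix.
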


\begin{proof}
Suppose that the longest zigzag in $\C$ has length $k$. For any $\pi\in\C$ of length $n$ consider the peak set $\peaks(\pi)$ and let $i$ and $j$ be two consecutive peaks (that is, there is no $k\in\peaks(\pi)$ such that $i<k<j$). Since there are no peaks between $i$ and $j$, the sequence $\pi(i)\cdots \pi(j)$ must be a \emph{valley}: that is, it is formed of a decreasing sequence, followed by an increasing sequence. Let $v_i$ be the index such that $i<v_i<j$ for which $\pi(v_i)$ is minimal (the `bottom of the valley'). Similarly, if $\ell$ is the leftmost peak in $\pi$, then $\pi(1)\cdots\pi(\ell)$ is a valley, and if $r$ is the rightmost peak in $\pi$, then $\pi(r)\cdots\pi(n)$ is a valley. In particular, we set $v_r$ to be the index in $[r,n]$ for which $\pi(v_r)$ is minimal.

Since the entries between consecutive peaks (and before the first, and after the last peak) form valleys, we see that the entries of $\pi$ can be partitioned into a sequence of $2(|\peaks(\pi)|+1)$ (possibly empty) intervals of entries, that alternately form decreasing and increasing permutations. 
By construction, the subpermutation formed on the indices $\peaks(\pi) \cup \{v_i:i\in\peaks(\pi)\}$ is a zigzag of length $2|\peaks(\pi)|$. Thus, $2|\peaks(\pi)| \leq k$ for every $\pi\in\C$, and hence $\pi$ belongs to the grid class whose matrix is
\[\begin{bmatrix}\av(12)&\av(21)&\av(12)&\av(21)&\cdots&\av(12)&\av(21)\end{bmatrix}\]
comprising $k+2$ cells (if $k$ is even), or $k+1$ cells (if $k$ is odd). 
\end{proof}

Our next result establishes a more precise characterisation of classes without long zigzags.  A \emph{vertical alternation} is a permutation in which every odd-indexed entry lies above every even-indexed entry, or vice-versa. Some simple applications of the Erd\H{o}s-Szekeres Theorem shows that every sufficiently long vertical alternation contains a long \emph{parallel} or \emph{wedge} alternation -- see Figure~\ref{fig-alternations}.

\begin{figure}
{\centering
\begin{tikzpicture}[scale=.25]
	\plotpermgrid{2,9,4,7,6,12,1,8,5,11,3,10}
	\draw[thick] (.5,6.5) -- ++(12,0);	
\end{tikzpicture}
\qquad
\begin{tikzpicture}[scale=.25]
	\plotpermgrid{7,1,8,2,9,3,10,4,11,5,12,6}
	\draw[thick] (.5,6.5) -- ++(12,0);	
\end{tikzpicture}
\qquad
\begin{tikzpicture}[scale=.25]
	\plotpermgrid{7,6,8,5,9,4,10,3,11,2,12,1}
	\draw[thick] (.5,6.5) -- ++(12,0);	
\end{tikzpicture}\par}
\caption{From left to right: a vertical alternation, a parallel alternation, and a wedge alternation.}\label{fig-alternations}
\end{figure}
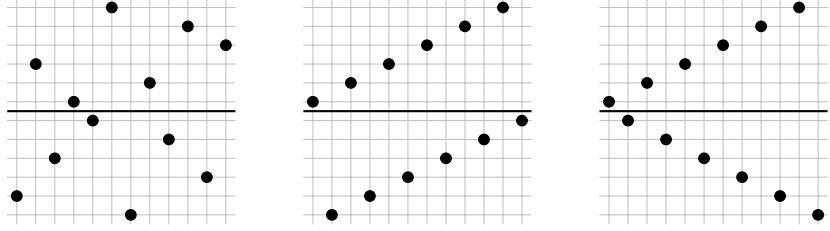

\begin{lemma}\label{lem:griddable}
The permutation class $\C$ contains only finitely many zigzags if and only if $\C$ is monotone griddable and does not contain arbitrarily long vertical alternations.
\end{lemma}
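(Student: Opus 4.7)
The plan is to prove the two directions separately. For the ``only if'' direction, suppose $\C$ contains only finitely many zigzags. Proposition~\ref{prop:w-class} immediately gives that $\C$ lies in a single-row monotone grid class, hence $\C$ is monotone griddable. For the alternation bound, I would check directly that every vertical alternation of length $\geq 3$ is itself a zigzag of the same length: among any three consecutive entries, two share a parity and lie on the same side of the separating horizontal line while the third lies on the opposite side, forcing a peak or a valley at the middle index. So the maximum zigzag length in $\C$ also bounds the length of its vertical alternations.

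For the converse I would argue the contrapositive: if $\C$ is monotone griddable and contains arbitrarily long zigzags, then $\C$ contains arbitrarily long vertical alternations. Fix a monotone $r\times c$ matrix $\MMM$ with $\C\subseteq\Grid(\MMM)$, and let $\pi\in\C$ be a zigzag of length $n$, together with some gridding. Since the column indices of the entries of $\pi$ are weakly increasing along positions, pigeonhole produces a single column containing at least $n/c$ consecutive entries of $\pi$; these entries form a sub-zigzag because any consecutive sub-sequence of a zigzag is a zigzag. This reduces the problem to a single column with $r$ monotone cells stacked vertically.

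I would then run a two-step Ramsey-type extraction. First, consider the falling consecutive pairs $(p_i,p_i+1)$ of the sub-zigzag (so $\pi(p_i)$ is a peak and $\pi(p_i+1)$ is a valley) and colour each such pair by the ordered pair of cells it occupies. Because the peak's value exceeds the valley's and both lie in a single column, the peak-cell $C_P$ is weakly above the valley-cell $C_V$. A pigeonhole selects a single colour $(C_P,C_V)$ realised by $\Omega(n/r^2)$ pairs. If $C_P$ lies strictly above $C_V$, then the two cells are separated by a horizontal line of the grid, so the sub-permutation at the union of the selected pair-positions alternates its values above and below that line and is therefore a vertical alternation of length $\Omega(n/r^2)$.

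The residual case $C_P=C_V$ is the technical core. Here the monotonicity of the shared cell forces it to be decreasing, and combining the zigzag condition with cell-monotonicity shows that for each selected pair $(p_i,p_i+1)$ the preceding entry $\pi(p_i-1)$ (a valley of $\pi$) lies in a cell strictly below $C_P$, while $\pi(p_i+2)$ (a peak of $\pi$) lies in a cell strictly above $C_P$. A second pigeonhole on these flanking cells fixes a pair $(A,B)$ with $A$ strictly below $C_P$ and $B$ strictly above $C_P$, and extracting the shifted positions $p_i-1$ and $p_i+2$ delivers a vertical alternation of length $\Omega(n/r^4)$. The main obstacle I expect is verifying that these shifted positions are strictly ordered across different $i$: a short case-check on the alternating peak--valley pattern of a zigzag rules out $p_{i+1}\in\{p_i+2,p_i+3\}$, so $p_{i+1}\geq p_i+4$ and the shifts cannot collide. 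In both cases the resulting alternation length tends to infinity with $n$, completing the contrapositive.
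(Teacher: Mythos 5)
Your proof is correct. The forward direction coincides with the paper's: Proposition~\ref{prop:w-class} gives monotone griddability, and the observation that every vertical alternation is itself a zigzag bounds the alternations. The converse is where you genuinely diverge, and it is worth being precise about what the paper does there. The paper's two paragraphs show, respectively, that ``not monotone griddable or containing long vertical alternations'' forces long zigzags (via Theorem~\ref{thm:hv} and the fact that $\bigoplus 21$, $\bigominus 12$ and vertical alternations are all zigzags), and that bounded zigzags force a single-row monotone gridding (Proposition~\ref{prop:w-class} again); as written, both of these establish the same implication, namely ``finitely many zigzags $\Rightarrow$ monotone griddable without long alternations''. The implication your column-extraction and two-stage pigeonhole argument proves --- a long zigzag inside a fixed monotone grid class must contain a long vertical alternation --- is exactly the reverse implication, and it is the form of the lemma actually invoked in the proof of Theorem~\ref{thm:main}. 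So your route is not merely different: it makes explicit the one non-trivial implication, which the paper leaves implicit (it amounts to the known characterisation that a class is griddable by a single row of monotone cells if and only if it is monotone griddable and contains no long vertical alternations). The details of your extraction --- reduction to a single column, colouring peak--valley descents by cell pairs, the forced decreasingness of a shared cell, and the spacing bound $p_{i+1}\geq p_i+4$ --- all check out; the only price is length, and you could shortcut the case analysis by citing the single-row griddability characterisation instead.
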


\begin{proof}
If $\C$ is not monotone griddable then it contains $\bigoplus21$ or $\bigominus12$ by Theorem~\ref{thm:hv}. In particular, for every $n\geq 1$, $\C$ contains either $2143\cdots (2n)(2n-1)$ or $(2n-1)(2n)\cdots 12$, both of which are zigzags. Similarly, if $\C$ contains arbitrarily long vertical alternations then for every $n\geq 1$ it contains a permutation of the form $a_1b_1a_2b_2\cdots a_nb_n$ where $\{a_1,\dots,a_n\}=\{n+1,\dots,2n\}$ and $\{b_1,\dots,b_n\}=\{1,\dots,n\}$, all of which are zigzags.

Conversely, Proposition~\ref{prop:w-class} shows that a class $\C$ with bounded length zigzags is contained in a monotone grid class comprising a single row, and this demonstrates both that $\C$ is monotone griddable and that it cannot contain arbitrarily long vertical alternations.
\end{proof}

We finish this section by recording an interesting consequence of the above theorem. The \emph{growth rate} of a permutation class $\C$ (or gridded permutation class $\C^\gridded$), if it exists, is $\lim_{n\to\infty}\sqrt[n]{|\C_n|}$, where $\C_n$ denotes the set of permutations in $\C$ of length $n$. The existence of the growth rate of a class in general depends upon whether the \emph{upper} and \emph{lower} growth rates coincide, that is, whether $\limsup_{n\to\infty}\sqrt[n]{|\C_n|}=\liminf_{n\to\infty}\sqrt[n]{|\C_n|}$.

\begin{cor}\label{cor:integral}
Let $\C$ be a class that contains only finitely many zigzags. Then $\gr(\C)$ exists and is integral.
\end{cor}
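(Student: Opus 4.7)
\emph{Plan.} My plan is to translate the enumeration problem into counting words in a subword-closed formal language, where integer growth rates are standard. By Proposition~\ref{prop:w-class}, $\C$ is contained in a single-row grid class $\Grid(\MMM)$ with $\MMM = [C_1,\ldots,C_k]$ and each $C_i \in \{\av(12),\av(21)\}$. The first step is to set up a length-preserving bijection between $\Grid(\MMM)^\gridded$ and $[k]^*$: a gridded permutation of length $n$ is uniquely determined by the word $w \in [k]^n$ that records, for each value $y \in [n]$, the cell containing the entry of value $y$. One verifies that this bijection carries gridded subpattern containment to the subword (subsequence) order on $[k]^*$. Consequently, $\C^\gridded$ corresponds to a subword-closed language $L \subseteq [k]^*$, which by Higman's Lemma~\ref{lem:higman} is determined by a finite antichain of minimal forbidden subwords.

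The next step is to invoke the standard fact that any such $L$ has integral growth rate, equal to
\[
M \;=\; \max\bigl\{\,|S| : S \subseteq [k] \text{ with } S^* \subseteq L\,\bigr\}.
\]
The lower bound $|L_n| \geq M^n$ is immediate from the defining property of $M$. For the matching upper bound, the cleanest route is via the structure theorem for subword-closed languages: $L$ can be written as a finite union of ``products'' of the form $B_0^* a_1 B_1^* a_2 \cdots a_s B_s^*$ with $B_i \subseteq [k]$ and $a_i \in [k]$, and each such product has generating function $x^s/\prod_{i=0}^{s}(1 - |B_i|\,x)$, hence growth rate $\max_i |B_i| \leq M$. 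Alternatively, one can give a short induction on the alphabet size $k$ by partitioning $L$ according to the set of letters used by each word. Since each permutation in $\Grid(\MMM)$ of length $n$ admits at most polynomially many griddings, $|\C_n|$ and $|\C^\gridded_n|$ differ by at most a polynomial factor, so that $\gr(\C) = \gr(L) = M$ exists and is integral.

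The principal obstacle is establishing the integrality of $\gr(L)$ for a subword-closed $L$. If one is willing to cite the structure theorem then this becomes essentially automatic; otherwise, a self-contained induction on $k$ is the natural route, the key observation being that if $[k]^* \not\subseteq L$ then some minimal forbidden subword lies inside $[k]^*$, which forces every sufficiently long word of $L$ to be decomposable into pieces that each restrict to a proper sub-alphabet, enabling the inductive step.
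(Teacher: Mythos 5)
Your proof follows essentially the same route as the paper: reduce to a single-row monotone grid class via Proposition~\ref{prop:w-class}, encode the gridded permutations as a subword-closed language over $[k]$, apply the integrality of growth rates of subword-closed languages, and transfer back to $\C$ since griddings only multiply the count by a polynomial factor. The only difference is that you unpack and correctly sketch the two ingredients the paper simply cites (Proposition~\ref{prop:gr-integral}, attributed to Albert, for the integral growth rate of subword-closed languages, and Proposition~\ref{prop:gr-grid} for the equality of gridded and ungridded growth rates), so this is the same argument with the black boxes opened.
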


We need two auxiliary results. The first tells us that when a class is $\MMM$-griddable, then it suffices to consider the upper and lower growth rates of the gridded permutations.

\begin{prop}[{Vatter~\cite[Proposition~2.1]{vatter:small-permutati:}}]\label{prop:gr-grid}
For a matrix of permutation classes $\MMM$ and a class $\C\subseteq\Grid(\MMM)$, the upper or lower growth rate of $\C$ is equal, respectively, to the upper or lower growth rate of $\C^\gridded$.
\end{prop}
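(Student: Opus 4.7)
The plan is to combine Propositions~\ref{prop:w-class} and~\ref{prop:gr-grid} with the second auxiliary result on integrality of growth rates for subclasses of single-row monotone grid classes. By Proposition~\ref{prop:w-class}, we have $\C \subseteq \Grid(\MMM)$ for a single-row matrix $\MMM = \begin{bmatrix}C_1 & \cdots & C_m\end{bmatrix}$ with each $C_i \in \{\av(12), \av(21)\}$. Proposition~\ref{prop:gr-grid} tells us the upper and lower growth rates of $\C$ coincide with those of $\C^\gridded$, so it suffices to show that $\gr(\C^\gridded)$ exists and equals a non-negative integer.

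For this core step, my plan is to appeal to a result asserting that any subclass of $\Grid(\MMM)^\gridded$, for $\MMM$ a single-row monotone matrix with $m$ cells, has growth rate equal to a non-negative integer at most $m$. A natural route to such a statement encodes each gridded permutation of length $n$ as a word in $\{1,\ldots,m\}^n$ recording which cell each position lies in, and observes that gridded pattern containment induces subword containment on these encodings. The set of arising words is then a subword-closed language over a finite alphabet, with each word carrying a multinomial multiplicity $\binom{n}{k_1, \ldots, k_m}$ reflecting the value-assignment choices within cells.

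The main obstacle is integrality itself, not the straightforward upper bound $\gr(\C^\gridded) \leq m$ (immediate from the identity $|\Grid(\MMM)^\gridded_n| = m^n$). Summing multinomial coefficients across admissible profiles could a priori produce non-integer growth rates via entropy-maximisation; however, the fact that $\C^\gridded$ arises from pattern avoidance imposes local combinatorial constraints whose interaction with the cell structure forces the effective growth rate to equal the number of ``productive'' cells -- an integer in $\{0, 1, \ldots, m\}$. Existence of the limit (beyond mere equality of upper and lower growth rates) would then follow from supermultiplicativity within $\Grid(\MMM)^\gridded$ via Fekete's lemma. Once integrality and existence for $\C^\gridded$ are established, the corollary follows from the reduction above.
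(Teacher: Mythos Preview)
Your proposal does not address the stated proposition at all. Proposition~\ref{prop:gr-grid} asserts that $\ugr(\C)=\ugr(\C^\gridded)$ and $\lgr(\C)=\lgr(\C^\gridded)$ for any $\C\subseteq\Grid(\MMM)$; the paper simply cites this from Vatter and gives no proof. Your write-up instead \emph{invokes} Proposition~\ref{prop:gr-grid} as a black box on the way to proving Corollary~\ref{cor:integral}. So as a proof of the proposition it is vacuous: you have assumed the very statement you were asked to establish.

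Even viewed as an attempt at Corollary~\ref{cor:integral}, there is a real gap. Your encoding ``recording which cell each position lies in'' is the wrong one for a single-row monotone grid class: read by position, every word is of the trivial form $1^{k_1}2^{k_2}\cdots m^{k_m}$, which is why you are forced to carry multinomial multiplicities and then cannot see why the growth rate should be integral. The encoding the paper has in mind reads the permutation by \emph{value} (bottom to top) and records the cell of each value; because each cell is monotone and the matrix has a single row, this is a genuine bijection between $\C^\gridded$ and a subword-closed language over $\{1,\dots,m\}$, with no multiplicities. Integrality then follows directly from Proposition~\ref{prop:gr-integral}. Your hand-wave about ``productive cells'' and ``local combinatorial constraints'' is not a substitute for this: without the correct bijection you have no argument that the limit exists, let alone that it is an integer.
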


The second result is attributed to Albert in one of Vatter's seminal works regarding the growth rates of permutation classes.

\begin{prop}[{Attributed to Albert -- see Vatter~\cite[Proposition 7.4]{vatter:growth-rates-of:}}]\label{prop:gr-integral}
The growth rate of every subword-closed language exists and is integral.
\end{prop}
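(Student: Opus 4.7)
The plan is to work through the minimal deterministic finite automaton (DFA) of $L$, leveraging the special structure that subword-closedness forces on it.

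First, I would observe that $L$ is a regular language. Applying Higman's Lemma (Lemma~\ref{lem:higman}) with the finite alphabet $\Sigma$ (trivially wqo under equality) shows that the subword order on $\Sigma^*$ is a wqo, so the complement of $L$ --- upward-closed under subwords --- has only finitely many minimal elements, the \emph{minimal forbidden subwords}. Hence $L$ is defined by avoidance of a finite set of subwords and is therefore regular.

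Let $M$ be the minimal DFA for $L$, with state set $Q$, transition function $\delta$, initial state $q_0$, and accepting set $F$; write $L_q$ for the language accepted from $q$. The key structural observation is that for every $q \in Q$ and $a \in \Sigma$, the inclusion $L_{\delta(q,a)} \subseteq L_q$ holds: if $u$ is a word taking $q_0$ to $q$ and $v \in L_{\delta(q,a)}$, then $uav \in L$, so subword-closedness gives $uv \in L$, hence $v \in L_q$. Any cycle $q_1 \to \cdots \to q_r \to q_1$ in $M$ therefore forces $L_{q_1} = \cdots = L_{q_r}$, which minimality collapses to a single state. So $M$ is a DAG with self-loops. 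Defining the \emph{self-loop alphabet} $A(q) = \{a \in \Sigma : \delta(q,a) = q\}$ and
\[ k = \max\{|A(q)| : q \text{ is reachable from } q_0 \text{ and } L_q \neq \emptyset\}, \]
I claim $\gr(L) = k$.

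For the lower bound, choose a state $q^*$ attaining $|A(q^*)| = k$, together with a word $u^*$ from $q_0$ to $q^*$ and a word $v^*$ from $q^*$ to an accepting state. The $k^n$ distinct self-loop sequences $s$ of length $n$ at $q^*$ yield $k^n$ distinct accepted words $u^* s v^*$, so $|L_{n+|u^*|+|v^*|}| \geq k^n$, giving $\liminf |L_n|^{1/n} \geq k$. For the upper bound, the recurrence
\[ f_q(n) = |A(q)| \cdot f_q(n-1) + \sum_{a \notin A(q)} f_{\delta(q,a)}(n-1), \]
where $f_q(n) = |L_q \cap \Sigma^n|$, can be solved bottom-up along the DAG by induction on the depth $d(q)$ of $q$, yielding $f_q(n) = O(n^{d(q)} k^n)$; taking $q = q_0$ gives $|L_n| = O(n^D k^n)$ and hence $\limsup |L_n|^{1/n} \leq k$. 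The two bounds together force the limit to exist and equal the integer $k$.

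The main obstacle is the structural collapse of cycles to self-loops. Although this is a quick consequence of minimality together with the containment $L_{\delta(q,a)} \subseteq L_q$, it is essential to the argument: without it $M$ could contain nontrivial strongly connected components, whose Perron eigenvalues can give irrational growth rates (as happens for general regular languages). Once the DAG structure is secured, the matching upper and lower bound estimates above are a routine induction along the DAG.
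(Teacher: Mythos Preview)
The paper does not supply its own proof of this proposition: it is quoted as a known result attributed to Albert (via Vatter~\cite[Proposition~7.4]{vatter:growth-rates-of:}) and used as a black box in the proof of Corollary~\ref{cor:integral}. So there is no in-paper argument to compare against.

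That said, your argument is correct and is essentially the standard one. The crucial step --- that $L_{\delta(q,a)}\subseteq L_q$ forces all non-trivial cycles of the minimal DFA to collapse to self-loops --- is sound (in the minimal DFA every state is reachable and distinct states have distinct right languages), and your matching bounds then follow by routine induction along the resulting DAG. Two minor points you might tighten in a full write-up: make explicit that ``depth'' $d(q)$ is measured along non-self-loop edges (so that $a\notin A(q)$ genuinely gives $d(\delta(q,a))<d(q)$), and note the degenerate case $k=0$ (i.e.\ $L$ finite) separately, since the polynomial-times-exponential estimate $O(n^{d}k^{n})$ is awkward to interpret when $k=0$.
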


\begin{proof}[Proof of Corollary~\ref{cor:integral}]
By Proposition~\ref{prop:w-class}, we may suppose that $\C$ is contained in a monotone grid class $\Grid(\MMM)$ whose defining matrix comprises a single row of (say) $m$ cells. 

The set $\C^\gridded$ of all $\MMM$-gridded permutations in $\C$ is in bijection with a subword-closed language over an alphabet of size $m$ (see, for example, the description in Section~7 of Vatter~\cite{vatter:growth-rates-of:}), and in this bijection, the set of words corresponding to $\C$ is also subword-closed. By Proposition~\ref{prop:gr-integral}, the growth rate of $\C^\gridded$ exists and is integral, and thus by Proposition~\ref{prop:gr-grid} the same is true of the growth rate of $\C$. 
\end{proof}

%
%
%
%
\section{Juxtapositions and lwqo}\label{sec:lwqo}

Since a class that contains only finitely many zigzags is $\MMM$-griddable for a monotone grid class formed of a single row, we now want to understand what happens when we juxtapose an arbitrary lwqo class $\C$ with such a grid class. The bulk of the remaining work lies in the next theorem, which establishes that lwqo is preserved whenever we juxtapose an lwqo class with $\av(21)$ or $\av(12)$.

\begin{thm}\label{thm:juxt-lwqo}
Let $\C$ be an arbitrary lwqo class, and let $\DDD$ be a monotone class. Then $\juxt{\C}{\DDD}$ is lwqo.
\end{thm}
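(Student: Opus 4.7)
The plan is to show the stronger statement that $\juxtbar\C\DDD$ is lwqo; Theorem~\ref{thm:juxt-lwqo} then follows from Proposition~\ref{prop-order-preserving} applied to the natural order-preserving surjection $(\juxtbar\C\DDD)\wr L \to (\juxt\C\DDD)\wr L$ that forgets the grid line. By a symmetric argument via complementation, we may assume $\DDD = \av(21)$, so the entries of $\tau$ are totally ordered by value.

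The key construction is an encoding $\Phi$ of labelled gridded permutations as labelled permutations built on $\sigma$ alone, with all $\tau$-information packed into the labels. Because $\tau$ is increasing, its entries partition into strips according to where their values fall relative to the $\sigma$-values: attach each $\tau$-entry to the $\sigma$-entry $p$ immediately below it in value (so the $\tau$-entries attached to $p$ occupy a contiguous subword of $\tau$), and let the \emph{bottom strip} consist of those $\tau$-entries below all of $\sigma$. To accommodate this bottom strip, I would work in an enriched family $\C^\bot$, obtained by prepending to each $\sigma\in\C$ an extra distinguished point in the bottom-left corner and insisting that embeddings send extra to extra. The obvious bijection $\C^\bot \wr L' \leftrightarrow (\C\wr L')\times L'$ together with Proposition~\ref{prop:product} shows that $\C^\bot$ is lwqo. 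Now define
\[
\Phi : (\juxtbar\C\DDD)\wr L \;\longrightarrow\; \C^\bot \wr (L \times L^*)
\]
by sending $(\sigma\tau,\ell)$ to $(\sigma^\bot,\lambda)$, where for each entry $p$ of $\sigma$, $\lambda(p) = (\ell(p), w_p)$ with $w_p \in L^*$ the sequence (in $\tau$-order) of labels of the $\tau$-entries attached to $p$, and for the extra point, $\lambda = (\bullet, w_\bot)$ with $\bullet$ a fixed dummy element of $L$ and $w_\bot$ the labels in the bottom strip. Since $L^*$ is wqo by Higman's Lemma~\ref{lem:higman} and hence $L\times L^*$ is wqo by Proposition~\ref{prop:product}, the codomain $\C^\bot \wr (L\times L^*)$ is wqo.

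The crux is to show that $\Phi$ is \emph{order-reflecting}: whenever $\Phi(\pi_1,\ell_1) \leq \Phi(\pi_2,\ell_2)$, there exists a labelled gridded embedding of $(\pi_1,\ell_1)$ into $(\pi_2,\ell_2)$. Given a label-respecting embedding $\psi:\sigma_1^\bot \hookrightarrow \sigma_2^\bot$, I would use $\psi|_{\sigma_1}$ for the $\sigma$-halves, and for each point $p$ of $\sigma_1^\bot$ use the Higman subword embedding $w_p^{(1)} \preceq w_{\psi(p)}^{(2)}$ to map the $\tau_1$-entries attached to $p$ into the $\tau_2$-entries attached to $\psi(p)$. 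Once order-reflectingness is in hand, any infinite antichain in $(\juxtbar\C\DDD)\wr L$ would map under $\Phi$ to an infinite antichain in the wqo set $\C^\bot \wr (L\times L^*)$, a contradiction.

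The main obstacle is to verify that these strip-wise subword embeddings glue into a valid gridded embedding of $\pi_1$ into $\pi_2$. The critical point is that if $p_+$ denotes the successor of $p$ by value in $\sigma_1^\bot$, then the images of the $\tau_1$-entries attached to $p$ must lie strictly below $\sigma_2(\psi(p_+))$ in order to preserve their relative order with $\sigma_1(p_+)$; this holds because $\psi$ is strictly rank-preserving, so the $\sigma_2^\bot$-strip directly above $\psi(p)$ lies entirely below $\sigma_2(\psi(p_+))$. The remaining verifications---horizontal order on the $\tau$-side (which follows because $\tau_2$ is increasing, so vertical and horizontal order agree there), label compatibility, and respect of the grid line---are then immediate from Higman's Lemma and the construction of $\Phi$.
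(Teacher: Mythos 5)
Your proposal is correct and is essentially the paper's argument in mirror image: the paper packs the increasing side into $L^*$-components of labels on $\C$ (plus one extra $L^*$ factor for the strip beyond all left-hand entries) and exhibits an order-preserving surjection from the wqo set $\C\wr(L\times L^*)\times L^*$ onto $\juxtbar{\C}{\DDD}\wr L$, whereas you run the same encoding in the opposite direction as an order-reflecting map into $\C^\bot\wr(L\times L^*)$, with your distinguished bottom point playing the role of the paper's extra $L^*$ factor. The key verification you isolate (that strip-wise Higman embeddings glue because the successor structure of values is preserved) is exactly the check carried out in the paper's proof that $\Psi$ is order-preserving.
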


\begin{proof}
By symmetry, we can assume that $\DDD =\av(21)$. Furthermore, it suffices to show that the gridded permutations, $\juxtbar{\C}{\DDD}$, are lwqo, since for any quasi-order $L$, the mapping $\Phi:\juxtbar{\C}{\DDD}\wr L\to\juxt{\C}{\DDD}\wr L$ that removes the gridline is an order-preserving surjection, and thus by Proposition~\ref{prop-order-preserving}, if $\juxtbar{\C}{\DDD}\wr L$ is wqo then so is $\juxt{\C}{\DDD}\wr L$. 

Let $(L,\leq_L)$ be an arbitrary wqo set of labels. By Higman's lemma, $(L^*,\preceq)$ is wqo. Furthermore, by Proposition~\ref{prop:product} the product $L\times L^*$ is also wqo, and thus $\C\wr (L\times L^*)$ is wqo since $\C$ is lwqo. Finally, another application of Proposition~\ref{prop:product} shows that $\C\wr (L\times L^*)\times L^*$ is wqo. 

A typical element of $\C\wr (L\times L^*)\times L^*$ has the form $\fP=((\pi,k_\pi),z_1\cdots z_q)$ where $\pi\in \C$ (of length $n$, say), $z_1,\dots,z_q\in L$, where $k_\pi: [n] \to L\times L^*$ is given by
\[ k_\pi(i) = (\ell(i),\lambda_{i1}\cdots\lambda_{in_i})\]
for all $i\in [n]$, in which $\ell:[n]\to L$, $\lambda_{ij}\in L$, and $n_i\geq 0$.

We now construct an order-preserving surjection $\Psi$ from $\C\wr (L\times L^*)\times L^*$ to $\juxtbar{\C}{\DDD}\wr L$. This mapping takes an object $\fP=((\pi,k_\pi),z_1\cdots z_q)$ and outputs an $L$-labelled permutation in $\juxtbar{\C}{\DDD}\wr L$ of length $n+\sum_{i=1}^n n_i+q$. Specifically, in $\Psi(\fP)$:
\begin{itemize}
\item There are $n$ points to the left of the gridline, order isomorphic to $\pi$. 
\item For $i\in[n]$, the $i$th point from the left is labelled by $\ell(i)$.
\item There are $\sum_{i=1}^n n_i+q$ points to the right of the gridline, forming an increasing sequence.
\item For $i\in[n]$, there are $n_i$ points to the right of the gridline that lie below the $i$th entry on the left, and above the next highest entry on the left (if this exists). These $n_i$ points are labelled $\lambda_{i1}$, $\dots$, $\lambda_{in_i}$ from bottom to top.
\item Above the highest entry on the left of the gridline, there are $q$ points to the right of the gridline, labelled $z_1,\dots,z_q$ from bottom to top.
\end{itemize}
See Figure~\ref{fig-psi}. The proof will be completed by showing that $\Psi$ is an order-preserving surjection.

\begin{figure}
\begin{tikzpicture}[scale=0.68]
\begin{scope}[scale=0.7,shift={(-.4,2)}]
	\plotpermgrid{3,5,1,6,2,4}
	\foreach \v/\seq [count=\x] in {3/\lambda_{11}\lambda_{12}\lambda_{13},
		5/\lambda_{21}\lambda_{22},1/\lambda_{31}\lambda_{32}\lambda_{33},
		6/\lambda_{41}\lambda_{42}\lambda_{43},2/\lambda_{51},
		4/\lambda_{61}\lambda_{62}}
	\node[empty] at ($(\v)+(-90:0.4)$) {\tiny$(\ell(\x),\seq)$};
	\node[empty] at (9,2.5) {$z_1z_2$};
\end{scope}
	
	\node[empty] at (7.75,3.5) {$\overset{\Psi}{\longrightarrow}$};
	\begin{scope}[shift={(8.5,0.5)}]
		\plotpermgrid{3,5,1,6,2,4}
		\draw[step=1cm,gray!50,very thin] (0.5,0.01) grid (13.99,6.99);
		\draw[thick] (6.5,0.01)--++(0,6.98);
		\foreach \v/\n [count=\x] in {3/4,5/3,1/4,6/4,2/2,4/3} {
			\node[empty] at ($(\v)+(-90:0.5)$) {$\ell(\x)$};
			\foreach \xx [count=\y] in {2,...,\n}
				\node[permpt,label={[label distance=-5pt]below right:\tiny$\lambda_{\v\y}$}] at (\x+6+\y/\n,\x-1+\y/\n) {};
		}
		\foreach \x in {1,2}
			\node[permpt,label={[label distance=-5pt]below right:\tiny$z_{\x}$}] at (13+\x/3,6+\x/3) {};
		
	\end{scope}
\end{tikzpicture}
\caption{The mapping $\Psi: \C\wr (L\times L^*)\times L^* \to \juxtbar{\C}{\DDD}\wr L$.}\label{fig-psi}
\end{figure}

First, any labelled gridded permutation in $\juxtbar{\C}{\DDD}\wr L$ comprises a set of points to the left of the gridline (that form a permutation from $\C$ with labels from $L$), interleaved by sequences of points to the right of the gridline (that form an increasing permutation, also with labels from $L$). With this in mind, for any specified element of $\juxtbar{\C}{\DDD}\wr L$ it is straightforward to identify a suitable preimage in $\C\wr (L\times L^*)\times L^*$, which shows that $\Psi$ is surjective.

Now consider $\fS=((\sigma,k_\sigma),w_1\cdots w_p)$ and $\fP=((\pi,k_\pi),z_1\cdots z_q)$ in $\C\wr (L\times L^*)\times L^*$, such that $\fS\leq \fP$.
 
Let $\sigma$ have length $m$ and $\pi$ length $n$. Since $\sigma \leq \pi$ as labelled permutations, there exists a subsequence $1\leq i_1<\cdots <i_m\leq n$ such that $\pi(i_1)\cdots\pi(i_m)$ is order isomorphic to $\sigma$, and $k_\sigma(j) \leq k_\pi(i_j)$ for all $j\in [m]$. If we write $k_\sigma(j) = (\ell_\sigma(j),\lambda_{j1}\cdots\lambda_{jm_{j}})$ and $k_\pi(i) = (\ell_\pi(i),\kappa_{i1}\cdots\kappa_{in_i})$, then $k_\sigma(j) \leq k_\pi(i_j)$ means that $\ell_\sigma(j)\leq_L\ell_\pi(i_j)$ and  $\lambda_{j1}\cdots\lambda_{jm_{j}}\preceq\kappa_{i_j1}\cdots\kappa_{i_jn_{i_j}}$ in generalised subword order. Finally, we also require $w_1\cdots w_p \preceq z_1\cdots z_q$.

To complete the proof, we show that $\Psi(\fS) \leq \Psi(\fP)$ as $L$-labelled gridded permutations.

The points to the left of the gridline in $\Psi(\fS)$ and $\Psi(\fP)$ form the $L$-labelled permutations $(\sigma,\ell_\sigma)$ and $(\pi,\ell_\pi)$, respectively. The subsequence $1\leq i_1<\cdots <i_m\leq n$ witnesses both that $\sigma\leq \pi$, and that $\ell_\sigma(j)\leq_L\ell_\pi(i_j)$, and hence $(\sigma,\ell_\sigma)\leq (\pi,\ell_\pi)$. We now consider the points to the right of the gridline. In $\Psi(\fS)$, for each $j\in[m]$ the points immediately below the entry on the left corresponding to $\sigma(j)$ form an increasing sequence of length $m_j$ labelled by $\lambda_{j1},\dots,\lambda_{jm_j}$. Similarly, in $\Psi(\fP)$, the points immediately below the entry corresponding to $\pi(i_j)$ form an increasing sequence of length $n_{i_j}$ labelled by $\kappa_{i_j1},\dots,\kappa_{i_jn_{i_j}}$. Since $\lambda_{j1}\cdots\lambda_{jm_{j}}\preceq \kappa_{i_j1}\cdots\kappa_{i_jn_{i_j}}$, we can embed these $m_j$ labelled points of $\Psi(\fS)$ in the $n_{i_j}$ labelled points of $\Psi(\fP)$.

Finally, in $\Psi(\fS)$, there are $p$ labelled entries to the right of the gridline that lie above all entries to the left of the grid line. Since $w_1\cdots w_p \preceq z_1\cdots z_q$, these $p$ entries can be embedded in the $q$ entries of $\Psi(\fP)$ in the top-right. We have now embedded every labelled entry of $\Psi(\fS)$ in $\Psi(\fP)$, and the proof is complete.
\end{proof}

Our approach to resolve one direction of Theorem~\ref{thm:main} will be to apply the preceding theorem iteratively. For the other direction, we appeal to pre-existing antichain constructions, which are succinctly summarised by the following theorem.

The \emph{cell graph} of a matrix $\MMM$ is the graph whose vertices are $\{(i,j):M_{ij}\neq \varnothing\}$ (corresponding to the non-empty cells of $\MMM$), and $(i,j)\sim(k,\ell)$ if and only if $i=k$ or $j=\ell$, and there are no non-empty cells between these $M_{ij}$ and $M_{k\ell}$ in their common row or column.

\begin{thm}[{See Brignall~\cite[Theorem 1.1]{brignall:pwo-grid-classes:}}]\label{thm:brignall}
	Let $\MMM$ be a gridding matrix where every non-empty cell is an infinite permutation class. Then $\Grid(\MMM)$ is not well-quasi-ordered whenever the cell graph of $\MMM$ has a cycle, or a component containing two or more cells that are not monotone griddable.
\end{thm}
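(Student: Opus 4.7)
My plan is to handle the two sufficient conditions separately, exhibiting an explicit infinite antichain in $\Grid(\MMM)$ for each. A useful common observation is that by Erd\H{o}s--Szekeres, every infinite permutation class contains $\av(21)$ or $\av(12)$, so arbitrarily long monotone sequences are available in every non-empty cell of $\MMM$.

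For the cyclic case, I would fix a shortest cycle $c_1, c_2, \ldots, c_k, c_1$ in the cell graph. Since each consecutive pair $c_i, c_{i+1}$ shares a row or column with no other non-empty cell between them, a point placed in $c_i$ can be \emph{linked} to a point of $c_{i+1}$ by a geometric constraint that propagates under containment. I would then construct, for each $n$, a permutation $\pi_n \in \Grid(\MMM)$ that winds once around the cycle, placing $n$ monotone entries in each cell $c_i$ and linking consecutive cells in the manner of the classical Murphy--Vatter oscillating antichain, with a distinguishing decoration inserted at one chosen cell to break any rotational symmetry. To establish $\pi_m \nleq \pi_n$ for $m \neq n$, I would propagate rigidity around the cycle: the linking forces any containment to respect the cyclic alignment, and the decoration pins down the starting cell, so the number of points in each cell must match exactly.

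For the second case, suppose cells $c$ and $c'$ in the same component of the cell graph are both not monotone griddable. By Theorem~\ref{thm:hv}, each contains $\bigoplus 21$ or $\bigominus 12$, so each holds arbitrarily long copies of the increasing oscillation $21436587\cdots$ or its reverse. Fixing a cell-graph path $c = d_0, d_1, \ldots, d_t = c'$, I would define $\tau_n$ to place a length-$n$ oscillation in each of $c$ and $c'$, joined through the intermediate cells by a monotone connector whose endpoints tie the two oscillations together positionally. Then $\tau_m \nleq \tau_n$ for $m < n$ follows by showing that the oscillation in cell $c$ of $\tau_m$ must embed into cell $c$ of $\tau_n$ (and similarly for $c'$), which in turn follows from the fact that the intermediate cells are monotone griddable and so cannot absorb a long oscillation, combined with a pigeonhole argument on the connecting path.

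The main obstacle in both cases is the antichain verification rather than the construction itself. For the cyclic case, the delicate step is ruling out embeddings that would exploit a symmetry of the cycle, which is precisely what the decoration is designed to prevent. For the two-cell case, the critical step is ensuring that the oscillations at the two ends of the connecting path cannot collapse onto a single cell or onto each other in the larger permutation, which requires care in interleaving the monotone connector with the two oscillations.
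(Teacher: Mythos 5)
First, a point of orientation: the paper does not prove Theorem~\ref{thm:brignall} at all --- it is imported verbatim from an earlier paper of Brignall, with the cyclic case attributed to Murphy and Vatter. So there is no internal proof to compare you against; the relevant comparison is with those cited sources, and at the level of strategy you have reconstructed exactly their constructions: an oscillation threaded around a cycle of cells in the first case, and a pair of increasing oscillations placed in two non-monotone-griddable cells joined by a monotone connector along a cell-graph path in the second. (These are precisely the shapes shown in Figure~\ref{fig-antichains}.)

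As a proof, however, the proposal has genuine gaps, because in both cases the entire content of the theorem is the pairwise-incomparability verification, which you assert rather than carry out. Two concrete repairs are needed. In the two-cell case you appeal to ``the fact that the intermediate cells are monotone griddable,'' but this is not a hypothesis: the component may contain many non-monotone-griddable cells, and the path joining your chosen $c$ and $c'$ may pass through some of them. You must either choose $c$ and $c'$ at minimal cell-graph distance among the non-monotone-griddable cells of the component (so that the interior of the path is monotone griddable), or, better, argue about the points you actually placed in the intermediate cells of $\tau_n$ --- which are monotone by construction --- rather than about the cell classes themselves. In the cyclic case, the ``distinguishing decoration to break rotational symmetry'' is underspecified: a decoration distinguishes one cell, but the real work is showing that the threaded oscillation is \emph{rigid}, i.e.\ that any embedding of $\pi_m$ into $\pi_n$ must map consecutive linked points to consecutive linked points and hence respect the cell structure; this is where the ``no non-empty cells between them'' clause in the definition of the cell graph is used, and where you must also fix, via Erd\H{o}s--Szekeres, which orientation (increasing or decreasing) each cell supplies, since that dictates how consecutive cells interleave. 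Until those verifications are written out, what you have is a faithful outline of the known proof rather than a proof.
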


Note that the `cyclic' case of the above theorem is originally due to Murphy and Vatter~\cite{murphy:profile-classes:}.

\begin{proof}[Proof of Theorem~\ref{thm:main}]
If one of $\C$ or $\DDD$ is not lwqo, then clearly neither is $\juxt{\C}{\DDD}$ since it contains both $\C$ and $\DDD$ as subclasses. So now suppose both $\C$ and $\DDD$ are lwqo, but contain arbitrarily long zigzags. By Lemma~\ref{lem:griddable} each of $\C$ and $\DDD$ either is not monotone griddable, or contains arbitrarily long vertical alternations (or both). 

If neither $\C$ nor $\DDD$ is monotone griddable, then $\juxt{\C}{\DDD}$ is not wqo (and thus also not lwqo) by Theorem~\ref{thm:brignall}. (See Figure~\ref{fig-antichains} (left) for a typical antichain element in this case.)

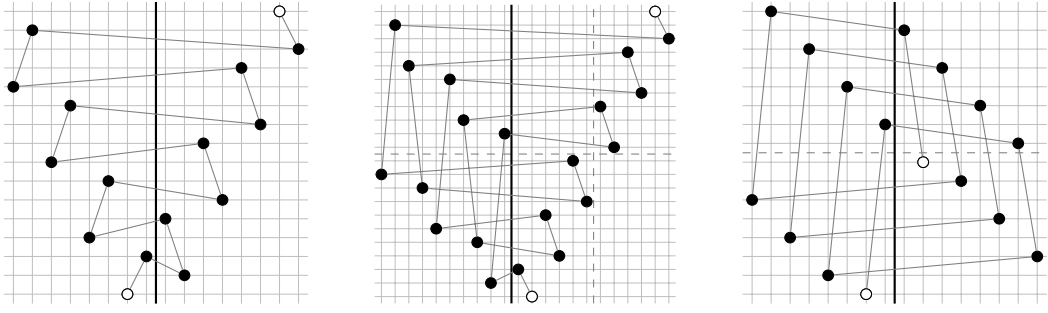
\begin{figure}
{\centering
\begin{tikzpicture}[scale=0.25]
	\plotpermgrid{12,15,8,11,4,7,1,3,5,2,9,6,13,10,16,14}
	\draw[thick] (8.5,0.5) -- ++ (0,16);
	\node[permpt,fill=white] at (1) {};
	\node[permpt,fill=white] at (16) {};
	\draw[thin,black!50] (1)--(3)--(2)--(5)--(4)--(7)--(6)--(9)--(8)--(11)--(10)--(13)--(12)--(15)--(14)--(16);
\end{tikzpicture}
\qquad
\begin{tikzpicture}[scale=0.18]
	\plotpermgrid{10,21,18,9,6,17,14,5,2,13,3,1,7,4,11,8,15,12,19,16,22,20}
	\draw[dashed,black!50] (16.5,0.5) -- ++ (0,22)
					 (0.5,11.5) -- ++ (22,0);
	\draw[thick] (10.5,0.5) -- ++ (0,22);
	\node[permpt,fill=white] at (1) {};
	\node[permpt,fill=white] at (22) {};
	\draw[thin,black!50] (1)--(3)--(2)--(13)--(12)--(15)--(14)--(5)--(4)--(7)--(6)--(17)--(16)--(19)--(18)--(9)--(8)--(11)--(10)--(21)--(20)--(22);
\end{tikzpicture}
\qquad\begin{tikzpicture}[scale=0.25]
	\plotpermgrid{6,16,4,14,2,12,1,10,15,8,13,7,11,5,9,3}
	\draw[dashed,black!50] (0.5,8.5) -- ++ (16,0);
	\draw[thick] (8.5,0.5) -- ++ (0,16);
	\node[permpt,fill=white] at (1) {};
	\node[permpt,fill=white] at (8) {};
	\draw[thin,black!50] (1)--(10)--(9)--(3)--(2)--(12)--(11)--(5)--(4)--(14)--(13)--(7)--(6)--(16)--(15)--(8);
\end{tikzpicture}
\par}
\caption{Typical labelled antichain elements arising in juxtaposition classes. Here, we may take $L=\{\bullet,\circ\}$ to be an antichain of size 2.}\label{fig-antichains}
\end{figure}

Now suppose, without loss of generality, that $\C$ is monotone griddable but contains long vertical alternations, and $\DDD$ is not monotone griddable. By Theorem~\ref{thm:hv}, the class $\DDD$ contains $\bigoplus21$ or $\bigominus12$. Consequently, $\juxt{\C}{\DDD}$ contains $\Grid(\MMM)$ for a matrix $\MMM$ of the following form:
\[
\MMM = \begin{bmatrix}\EEE_1&&\bigoplus21\\\EEE_2&\bigoplus 21\end{bmatrix} \quad\text{or}\quad \begin{bmatrix}\EEE_1&\bigominus12\\\EEE_2&&\bigominus 12\end{bmatrix}
\]
where $\EEE_1$ and $\EEE_2$ are each either $\av(21)$ or $\av(12)$. In any case, the cell graph of $\MMM$ comprises a component containing two cells that are not monotone griddable (again by Theorem~\ref{thm:hv}), and hence $\Grid(\MMM)$ is not wqo by Theorem~\ref{thm:brignall}. (See Figure~\ref{fig-antichains} (middle) for a typical antichain element in this case.)

Finally for this direction, suppose that both $\C$ and $\DDD$ are monotone griddable, but both contain arbitrarily long vertical alternations. In this case, $\juxt{\C}{\DDD}$ contains $\Grid(\MMM)$ for a matrix $\MMM$ of the following form
\[
\MMM = \begin{bmatrix}\EEE_1&\EEE_2\\\EEE_3&\EEE_4\end{bmatrix}
\]
where $\EEE_1$, $\EEE_2$, $\EEE_3$ and $\EEE_4$ are each either $\av(21)$ or $\av(12)$. In any case, the cell graph of $\MMM$ comprises a component that is a cycle, so $\Grid(\MMM)$ is once again not wqo by Theorem~\ref{thm:brignall}, and hence neither is $\juxt{\C}{\DDD}$. (See Figure~\ref{fig-antichains} (right) for a typical antichain element in this case.)

For the other direction, suppose (without loss of generality) that $\C$ is lwqo, and $\DDD$ contains only bounded length zigzags. By Proposition~\ref{prop:w-class}, there exists a single-row monotone grid class $\EEE$ such that $\DDD\subseteq \EEE$. We claim that $\juxt{\C}{\EEE}$ is lwqo.

Write $\EEE=\Grid(\MMM)$ where $\MMM=\begin{bmatrix}\EEE_1&\EEE_2&\cdots&\EEE_k\end{bmatrix}$ for classes $\EEE_i$ each equal to $\av(21)$ or $\av(12)$ ($1\leq i\leq k$). Let $\C_0=\C$, and for $1\leq i\leq k$ set
\[\C_i = \Grid(\begin{bmatrix}\C&\EEE_1 &\cdots &\EEE_i\end{bmatrix}).\]
Now $C_0=\C$ is lwqo, and it follows by induction and Theorem~\ref{thm:juxt-lwqo} that $\C_i = \juxt{\C_{i-1}}{\EEE_i}$ is lwqo for each $i=1,\dots,k$. In particular $\C_k = \juxt{\C}{\EEE}$ is lwqo. The result now follows since $\juxt{\C}{\DDD}\subseteq \juxt{\C}{\EEE}$. 
\end{proof}

%
%
%
%
%
%
%
%
\section{Concluding remarks}\label{sec:conclusion}

The methods and ideas in this note can almost certainly be adapted to a characterisation of lwqo in grid classes, although it would likely be technically and notationally awkward to do so. 

A more interesting future direction is to consider lwqo in \emph{subclasses} of these grid classes. For example, while the juxtaposition of $\bigominus 12$ with $\bigoplus 21$ contains the infinite antichain comprising elements of the form shown on the left of Figure~\ref{fig-antichains}, there exist subclasses of this juxtaposition that are lwqo. Individual cases such as this are relatively easy to characterise, but a general answer seems further out of reach.

Can a similar characterisation can be achieved for (unlabelled) wqo? Although the antichain elements depicted in Figure~\ref{fig-antichains} use two labels, the proof of Theorem~\ref{thm:main} in fact uses only unlabelled antichains, so aspects of this question already have an answer. However, if $\C$ is a wqo-but-not-lwqo class, then it is sometimes possible to break wqo by juxtaposing $\C$ with the class containing just the singleton permutation, while in other cases, $\C$ must be juxtaposed with two entries. In general, we cannot hope to make progress on this question without a significantly deeper understanding of wqo in permutation classes.



%
%
%
%
%
%
%
%

\bibliographystyle{plain}
\bibliography{refs}

\end{document}